
\documentstyle[12pt]{article}
\textheight 620pt
\textwidth 15cm

\begin{document}

\newcommand\comp{\circ}
\newtheorem{theorem}{Theorem}[section]
\newtheorem{example}[theorem]{Example}
\newtheorem{conjecture}[theorem]{Conjecture}
\newtheorem{examples}[theorem]{Examples}
\newtheorem{proposition}{Proposition}[section]
\newtheorem{fact}[theorem]{Fact}
\newtheorem{problem}[theorem]{Open problem}

\newcommand{\I}{\mbox{{\bf I}}}
\newcommand{\A}{\mbox{{\bf Ax \ }}}
\newcommand{\qed}{\hspace*{\fill}\rule{1 ex}{1.5 ex}\\}

\newtheorem{corollary}[theorem]{Corollary}
\newtheorem{definition}[theorem]{Definition}
\newtheorem{remark}[theorem]{Remark}
\newtheorem{lemma}[theorem]{Lemma}
\newtheorem{claim}{Claim}[theorem]
\newtheorem{construction}[theorem]{Construction}
\newenvironment{proof}{\begin{trivlist}\item[]{\bf
Proof.}}{\qed\end{trivlist}}

\newcommand{\comment}[1]{\typeout{swallowing comment}}

\hyphenation{equi-v-al-ent dimen-s-ion mon-ad-ic}

\title{On Induced Subgraphs of Finite Graphs not Containing Large Empty and Complete
Subgraphs}
\author{G\'abor S\'agi\thanks{Supported by Hungarian
National Foundation for Scientific Research grant 113047.
}}


\maketitle
\bigskip

\begin{abstract}
In their celebrated paper \cite{ErdosHajnal}, Erd\H{o}s and
Hajnal asked the following: is it true, that for any finite graph
${\cal H}$ there exists a constant $c({\cal H})$ such that for
any finite graph ${\cal G}$, if ${\cal G}$ does not contain
complete or empty induced subgraphs of size at least $|V({\cal
G})|^{c({\cal H})}$, then ${\cal H}$ can be isomorphically
embedded into ${\cal G}$ ? The positive answer has become known as
the Erd\H{o}s-Hajnal conjecture.\\
\indent In Theorem \ref{mainthm} of the present paper we settle
this conjecture in the affirmative. To do so, we are studying
here the fine structure of ultraproducts of finite sets, so our
investigations have a model theoretic character. \\
\\
{\em 2010 Mathematical Subject Classification:} 03C20, 03C13, 05C55, 28E05.  \\
{\em Keywords and Phrases:} Ultraproducts, Erd\H{o}s-Hajnal
Property, Ramsey Theory of Finite Graphs.
\end{abstract}

\section{Introduction}
\label{intro} \ \indent In their celebrated paper
\cite{ErdosHajnal}, Erd\H{o}s and Hajnal asked the following
question: is it true, that for any finite graph ${\cal H}$ there
exists a constant $c({\cal H})$ such that for any finite graph
${\cal G}$, if ${\cal G}$ does not contain complete or empty
induced subgraphs of size at least $|V({\cal G})|^{c({\cal H})}$,
then ${\cal H}$ can be isomorphically embedded into ${\cal G}$ ?
The positive answer has
become known as the Erd\H{o}s-Hajnal conjecture. \\
\indent It is known to be true for a few special cases. In
\cite{ErdosHajnal} Erd\H{o}s and Hajnal proved a somewhat weaker
general result: for any finite graph ${\cal H}$ there exists a
constant $c({\cal H})$ such that for any finite graph ${\cal G}$,
if ${\cal G}$ does not contain complete or empty induced
subgraphs of size at least $e^{c({\cal H})\sqrt{log_{2}|V({\cal
G})|}}$, then ${\cal H}$ can be isomorphically embedded into
${\cal G}$. It is also known from \cite{ErdosHajnalPach}, that if
${\cal G}$ does not contain complete, or empty induced bipartite
subgraphs with both parts of size polynomial in $|V({\cal G})|$,
then ${\cal H}$ can be embedded into ${\cal G}$. In \cite{CFS} it
was shown, that for $k \geq 4$, the class of $k$-uniform
hypergraphs does not satisfy the natural generalization of the
Erd\H{o}s-Hajnal Conjecture. Further interesting related results
can be found in \cite{FoxSudakov}. For a comprehensive survey we
refer to \cite{Chud} and the references therein. \\
\indent In Theorem \ref{mainthm} of the present paper we settle
the original conjecture in the affirmative. The rest of the paper
is devoted to work out the technical details we need to prove
Theorem \ref{mainthm}, which is the main result of the paper. We
are studying here the fine structure of ultraproducts of finite
sets, consequently, our investigations have a model theoretic
character. Our methods have been inspired by nonstandard measure
theory; however, our presentation does not refer to any purely
measure theoretic method or result. We note, that utilizing a
more or less same method, several results (in rather different
areas) have been obtained recently. In this respect we refer to
\cite{SagiGyenis},
\cite{Towsner}, \cite{Malliaris} and \cite{Malliaris2}. \\
\indent The rest of the paper is organized as follows. In Section
\ref{prel} we are recalling the notions and methods we will need
later. This section has a survey character in order to keep the
present paper self contained. In this section there are no new
results. In Section \ref{proofs} we present all the proofs we
need to derive Theorem \ref{mainthm}, the main result of the
paper. As we mentioned, Theorem \ref{mainthm} settles the
Erd\H{o}s-Hajnal conjecture affirmatively. Finally, in Section
\ref{conc} we present some problems which remained open. Some of
these problems can be regarded as certain generalizations of the
Erd\H{o}s-Hajnal conjecture, some other problems are related to
the methods and results presented in Section \ref{proofs}.

\section{Preliminaries}
\label{prel}
\ \indent We start by summing up our system of
notation, which is mostly
standard. After this, we recall the notions and results we need to establish the proof of our main theorem.\\
\indent Throughout $\omega$ denotes the set of natural numbers
and for every $n \in \omega$ we have $ n = \{ 0,1,...,n-1 \}$.
Let $A$ and $B$ be sets. Then ${}^{A}B$ denotes the set of
functions whose domain is $A$ and whose range is a subset of $B$.
Moreover, ${}^{< \omega}A$ is defined to be ${}^{< \omega}A =
\cup_{n \in \omega} {}^{n}A$. In addition, $ |A| $ denotes the
cardinality of $A$; if $\kappa$ is a cardinal then $[A]^{\kappa}$
denotes the set of subsets of $A$ which are of cardinality
$\kappa$ and ${\cal P}(A)$ denotes the power set of $A$, that is,
${\cal P}(A)$ consists of all subsets of $A$. \\
\indent $\Re$ denotes the set of real numbers; $\Re^{+}$ denotes
the set of positive real numbers. In addition, ''$log$'' denotes
the logarithm function of base $2$.
Throughout we use
function composition in such a way that the rightmost factor acts
first. That is, for functions
$f,g$ we define $f \comp g (x) = f(g(x))$. \\
\indent If $I$ is a set, ${\cal A}_{i}$ is a structure for all
$i\in I$ and ${\cal F} \subseteq {\cal P}(I)$ is an ultrafilter,
then $\prod_{i \in I} {\cal A}_{i}/{\cal F}$ denotes the
ultraproduct of the ${\cal A}_{i}$ modulo ${\cal F}$. \\
\\
{\bf Graphs.} By a graph we mean a simple, undirected graph ${\cal
G} = \langle V({\cal G}), E({\cal G}) \rangle$, where $V({\cal
G})$ is the set of vertices of ${\cal G}$ (which may be finite or
infinite) and $E({\cal G})$ is the set of edges of ${\cal G}$.
Let $X \subseteq V({\cal G})$. Then $\Gamma(X)$ denotes the set
of vertices of ${\cal G}$ connected to some elements of $X$:
$$\Gamma(X) = \{a \in V({\cal G}): (\exists b \in X) (\langle a,b
\rangle \in E({\cal G})) \}.$$ If $a \in V({\cal G})$ then
instead of $\Gamma(\{a\})$ we will simply write $\Gamma(a)$. \\
\\
{\bf Ultratopologies.} Now we recall some parts of \cite{cl1a}
and \cite{ut}; for further related results we also refer to
\cite{sash}.

\begin{definition}
Let $\langle A_{i}: i \in I \rangle$ be a sequence of arbitrary
sets, let $k \in \omega$ and let ${\cal F}$ be an ultrafilter
over $I$. A $k$-ary relation $X \subseteq {}^{k}(\Pi_{i \in I}
A_{i} /{\cal F})$ is defined to be {\em decomposable} iff for each
$i \in I$, there are $X_{i} \subseteq {}^{k}A_{i}$ such that $X =
\Pi_{i \in I} X_{i} /{\cal F}$.
\end{definition}

Decomposable relations may be characterized in terms of some
topologies defined naturally on ultraproducts.

\begin{definition}
Let $k \in \omega$. Then a function $\hat{\ }: {}^{k}(\Pi_{i \in
I} {\cal A}_{i} /{\cal F})\rightarrow {}^{k}(\Pi_{i \in I} {\cal
A}_{i})$ is defined to be a $k$-dimensional choice function iff
for all $\bar{a} \in {}^{k}(\Pi_{i \in I} {\cal A}_{i}
/{\cal F})$ we have $\bar{a} = \hat{\bar{a}}/{\cal F}$. \\
\indent For a given choice function $\hat{\ }$, $\bar{a} \in
{}^{k}(\Pi_{i \in I} {\cal A}_{i} /{\cal F})$ and $X \subseteq
{}^{k}(\Pi_{i \in I} {\cal A}_{i} /{\cal F})$ define
$T(\bar{a},X)$ as follows: \\
\\
\centerline{$T(\bar{a},X) = \{ i \in I: (\exists \bar{b} \in
X)(\hat{\bar{a}}(i) = \hat{\bar{b}}(i)) \}$.}
\end{definition}

A set $X \subseteq {}^{k}(\Pi_{i \in I} {\cal A}_{i} /{\cal F})$
is defined to be closed with respect to $\hat{\ }$, iff for all
$\bar{a} \in {}^{k}(\Pi_{i
\in I} {\cal A}_{i} /{\cal F})$ we have \\
\\
\centerline{$T(\bar{a},X) \in {\cal F}$ implies $\bar{a} \in X$.}
\\
\\
\indent Let $\hat{\ }$ be a fixed $k$-dimensional choice
function. For completeness, we note, that the family of $k$-ary
relations $X \subseteq {}^{k}(\Pi_{i \in I} {\cal A}_{i} /{\cal
F})$ closed with respect to $\hat{\ }$, is the family of all
closed sets of a topological space on ${}^{k}(\Pi_{i \in I} {\cal
A}_{i} /{\cal F})$. These spaces are called {\em
ultratopologies}. \\
\indent By Theorem
3.8 of \cite{cl1a} a $k$--ary relation $X$ on an ultraproduct
$A=\Pi_{i \in I} {\cal A}_{i} /{\cal F}$ is decomposable if and
only if there is a $k$--dimensional ultratopology on $A$ under
which $X$ is clopen, see also Theorem 2.1
and Corollary 2.2
in \cite{ut}. For further details we refer to \cite{cl1a} and
\cite{ut}. \\
\\
{\bf Lower Cofinalities of Ultrafilters.} Suppose ${\cal F}
\subseteq {\cal P}(I)$ is an ultrafilter and let $\kappa$ be a
regular cardinal. The natural order of $\kappa$ is the ordering
relation of ordinals restricted to $\kappa$; it will be denoted
by $<_{\kappa}$. An element $a \in {}^{I}\kappa/{\cal F}$ is
defined to be unbounded iff for all $n \in \kappa$ we have $\{i
\in I: \hat{a}(i) \geq n \} \in {\cal F}$. Recall Definition
VI.3.5 from \cite{clasth}: the lower cofinality $lcf(\kappa,{\cal
F})$ is the smallest cardinality $\lambda$ such that there exists
a $\lambda$-sized set of unbounded elements of ${}^{I}\kappa/{\cal
F}$ which is unbounded from below (according to the
ultraproduct-order ${}^{I}<_{\kappa}/{\cal F}$ of the natural
order $<_{\kappa}$ of $\kappa$).

\section{Proofs}
\label{proofs}
\ \indent In this section we present the lemmas and
their proofs we need to
establish our main result: Theorem \ref{mainthm}. \\
\indent Throughout $I$ is a set, $A_i=\{0,1,...,|A_{i}|-1\}$ is a
finite set (that is, $A_{i}$ is an initial segment of the set of
natural numbers), ${\cal G}_{i} = \langle A_{i},E_{i} \rangle$ is
a simple, undirected graph for all $i \in I$ and ${\cal F}
\subseteq {\cal P}(I)$ is a nonprincipal ultrafilter on $I$. In
addition, $A = \prod_{i\in I} A_{i}/{\cal F}$ and ${\cal G} =
\prod_{i \in I} {\cal G}_{i}/{\cal F}$. Throughout the paper we
tacitly assume, that the ultraproduct $A$ is infinite. To denote
the vertex set of ${\cal G}_{i}$, in place of $A_{i}$, we may
write $V({\cal G}_{i})$, as well.

\begin{lemma}
\label{ladderlemma}
There exists a sequence $\langle f_{\alpha} \in {}^{I}\omega:
\alpha < lcf(\aleph_{0},{\cal F}) \rangle$
such that the following propositions hold for all $\alpha,\beta < lcf(\aleph_{0},{\cal F})$ and $n \in \omega$. \\
\indent (i) $f_{\alpha}$ is unbounded, that is, $\{ i \in I:
f_{\alpha} (i) \geq n \} \in {\cal F}$; \\
\indent (ii) $\alpha < \beta$ implies $f_{\beta} <_{\cal F}
f_{\alpha}$, that is, $\{i \in I: f_{\beta}(i) < f_{\alpha}(i) \}
\in {\cal F}$; \\
\indent (iii) for all unbounded $h \in {}^{I}\omega$ there exists
$ \alpha < lcf(\aleph_{0},{\cal F})$ with $f_{\alpha} <_{\cal F}
h$.
\end{lemma}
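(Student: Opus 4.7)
The plan is to construct the $f_{\alpha}$ by transfinite recursion of length $\lambda := lcf(\aleph_{0},{\cal F})$, using the minimality clause of the definition at every stage. By the definition recalled from \cite{clasth}, there exists a set $S = \{ s_{\alpha} : \alpha < \lambda \}$ of unbounded elements of ${}^{I}\omega/{\cal F}$ such that for every unbounded $h$ some $s_{\alpha}$ satisfies $s_{\alpha} <_{\cal F} h$; fix such an $S$ together with an enumeration. I will arrange $f_{\alpha} <_{\cal F} s_{\alpha}$ at every stage, which will take care of (iii), while the minimality of $|S|$ drives the inductive step giving (i) and (ii).

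The one preparatory observation: whenever $h \in {}^{I}\omega$ is unbounded, the function $h^{-}(i) := \max \{ 0, h(i) - 1 \}$ is also unbounded (since $\{i: h^{-}(i) \geq n\} \supseteq \{i: h(i) \geq n+1\} \in {\cal F}$) and satisfies $h^{-} <_{\cal F} h$ (on the ${\cal F}$-large set $\{i: h(i) \geq 1\}$). Now, at stage $\alpha < \lambda$, suppose $\langle f_{\beta}: \beta < \alpha \rangle$ has already been defined so that each $f_{\beta}$ is unbounded and $f_{\gamma} <_{\cal F} f_{\beta}$ whenever $\beta < \gamma < \alpha$. Put $T_{\alpha} := \{ f_{\beta}: \beta < \alpha \} \cup \{s_{\alpha}\}$; then $|T_{\alpha}| < \lambda$, so by the minimality clause in the definition of $\lambda$, $T_{\alpha}$ is \emph{not} unbounded from below within the unbounded elements of ${}^{I}\omega/{\cal F}$. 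Since $<_{\cal F}$ is a total order on the ultrapower, this yields an unbounded $h_{\alpha}$ with $h_{\alpha} \leq_{\cal F} t$ for every $t \in T_{\alpha}$. Define $f_{\alpha} := h_{\alpha}^{-}$; then $f_{\alpha}$ is unbounded, $f_{\alpha} <_{\cal F} h_{\alpha} \leq_{\cal F} f_{\beta}$ for every $\beta < \alpha$, and also $f_{\alpha} <_{\cal F} s_{\alpha}$.

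Conditions (i) and (ii) are thus ensured by construction. For (iii), any unbounded $h$ admits some $s_{\alpha} \in S$ with $s_{\alpha} <_{\cal F} h$ by the choice of $S$, and then $f_{\alpha} <_{\cal F} s_{\alpha} <_{\cal F} h$. The only non-formal step is the use of the minimality of $\lambda$ to produce $h_{\alpha}$: this is the single place where Shelah's lower-cofinality definition does any work, and the totality of the ultrapower order is used to convert ``$T_{\alpha}$ is not a witness'' into a concrete common lower bound. Everything else is bookkeeping plus the ``shift down by $1$'' trick for replacing $\leq_{\cal F}$ by $<_{\cal F}$.
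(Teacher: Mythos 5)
Your proof is correct and follows essentially the same transfinite recursion as the paper's: fix a downward-cofinal family of unbounded elements, and at each stage take a common unbounded lower bound of the previously constructed $f_\beta$ together with the corresponding cofinal element, using $|{\cdot}|<\lambda$ and the definition of $lcf$. The only difference is cosmetic: you make explicit the ``subtract one'' device to turn a weak lower bound into a strict one, and you note the use of totality of the ultrapower order, both of which the paper leaves implicit.
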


\begin{proof}
Let $\{g_{\alpha}: \alpha < lcf(\aleph_{0},{\cal F}) \}$ be a
cofinal subset of the set of unbounded elements of
${}^{I}\aleph_{0}/{\cal F}$ (ordered by the reverse of the
ultrapower of the natural ordering of $\omega$). We define the
sequence $\langle f_{\alpha} \in {}^{I}\omega: \alpha <
lcf(\aleph_{0},{\cal F}) \rangle$ by transfinite recursion as
follows. Suppose $\gamma < lcf(\aleph_{0},{\cal F})$ and for any
$\alpha < \gamma$ the function $f_{\alpha}$ has already been
defined such that the following stipulations are satisfied: \\
\indent (a) $f_{\alpha}$ is unbounded; \\
\indent (b) $\alpha < \beta < \gamma $ implies  $f_{\beta} <_{\cal
F} f_{\alpha}$; \\
\indent (c) $\alpha < \beta < \gamma $ implies $f_{\beta} <_{\cal
F} g_{\alpha}$. \\
Let $F=\{f_{\alpha}, g_{\alpha}: \alpha < \gamma \}$. Since
$\gamma < lcf(\aleph_{0},{\cal F})$, it follows, that $|F| <
lcf(\aleph_{0},{\cal F})$, hence, there exists an unbounded
$f_{\gamma} \in {}^{I} \omega$ which is a lower bound of the set
$F$. Clearly, (a)-(c) remain true. In this way the sequence
$\langle f_{\alpha} \in {}^{I}\omega: \alpha <
lcf(\aleph_{0},{\cal F}) \rangle$ can be completely built up.
Clearly, (a) implies (i) and (b) implies (ii). Since
$\{g_{\alpha}: \alpha < lcf(\aleph_{0},{\cal F}) \}$ is cofinal,
(c) implies (iii).
\end{proof}

\begin{definition}
Let $B$ be a set. A function $X: {}^{< \omega}2 \rightarrow {\cal
P}(B)$ is defined to be a Hausdorff scheme over $B$ iff the
following stipulations are satisfied for all $p\in {}^{<
\omega}2$: \\
\indent $X_{p^{\frown}0} \cap X_{p^{\frown}1} = \emptyset$; \\
\indent $X_{p^{\frown}0}, X_{p^{\frown}1} \subseteq X_{p}$. \\
Thus, a Hausdorff scheme is an infinite binary tree whose
vertices are colored with certain subsets of $B$. Above, and
thereafter we write $X_{p}$ instead of
$X(p)$. \\
\indent A Hausdorff scheme over $A$ is defined to be decomposable
iff $X_{p}$ is a decomposable subset of $A$ for any $p \in {}^{<
\omega}2$.
\end{definition}

\begin{definition}
Let $Y = \langle Y_{i}, i \in I \rangle /{\cal F}$ be a decomposed
subset of $A$. If $|Y_{i}| \geq 1$ and $|A_{i}|\geq 2$, then the
size $\mu_{i}(Y)$ of \ $Y$ at $i$ is defined to be
$$\mu_{i}(Y) = \frac{log(|Y_{i}|)}{log(|A_{i}|)}.$$
\end{definition}

\begin{definition}
Let $X = \langle X_{i}: i \in I \rangle /{\cal F}$ be a
decomposable subsets of $A$.  $X$ is defined to be big iff there exists $c \in \Re^{+}$ such that \\
\\
\centerline{$(*) \indent \{i \in I: |X_{i}| \geq |A_{i}|^{c} \} \in {\cal F}$.} \\
\\
We note, that although a decomposable $X \subseteq A$ may have
several different decompositions, the truth of $(*)$ does not
depend on the particular choice of the decomposition $\langle
X_{i}: i \in I \rangle /{\cal F}$.
\\
\indent In addition, $X$ is defined to be small iff it is
(decomposable and) not big.
\end{definition}

\begin{definition}
Let $X = \langle X_{p}: p \in {}^{<\omega}2 \rangle$ be a
decomposed Hausdorff scheme of $A$. Then the depth $\delta_{i}(X)$
of $X$ at $i$ is defined to be
$$ \delta_{i}(X) = max \{k \in \omega: \ (\forall p \in {}^{k}2)((X_{p})_{i} \not=
\emptyset) \}). $$
\end{definition}

\begin{definition}
Suppose $\langle f_{\alpha} \in {}^{I}\omega: \alpha <
lcf(\aleph_{0},{\cal F}) \rangle$ is a sequence of unbounded
functions satisfying the consequences of Lemma \ref{ladderlemma}.
Let $Y = \langle Y_{i}, i \in I \rangle /{\cal F}$ and $Z =
\langle Z_{i}, i \in I \rangle /{\cal F}$ be decomposable subsets
of $A$ and let $\alpha < lcf(\aleph_{0},{\cal F})$. Then $Z$ is
defined to be $\alpha$-close to $Y$ iff
$$\{i \in I: \mu_{i}(Z) \geq \mu_{i}(Y)-
\frac{f_{\alpha}(i)}{log(|A_{i}|)} \} \in {\cal F}.$$
\end{definition}

\begin{remark} {\em Keeping the notation introduced in the above
definitions, we make the following remarks. \\
\indent (1) Note, that for all $i \in I$ we have
$|Y_{i}|=|A_{i}|^{\mu_{i}(|Y_{i}|)}$. Consequently, $Y$ is big
iff there exists $c \in \Re^{+}$ such that $$\{i \in I:
\mu_{i}(X) \geq c \} \in {\cal F}.$$
%
\indent (2) Combining the facts, that $A_{i}$ is finite and
$(X_{p})_{i} \supseteq (X_{p^{\frown}t})_{i}$ for any $t \in 2$,
one concludes that the set $$\{k \in \omega: \ (\forall p \in
{}^{k}2)((X_{p})_{i} \not= \emptyset) \}$$ is finite, hence it
has a maximum. \\
\indent (3) Note, that $\alpha$-closeness is not a symmetric
notion. }
\end{remark}

Let $X = \langle X_{p}: p \in {}^{<\omega}2 \rangle$ be a
decomposable Hausdorff scheme of $A$. For $i \in I$ we say, that
$T_{i} \subseteq A_{i}$ is a transversal set at $i$ iff there
exists $l \in \omega$ such that for all $p \in {}^{l}2$ we have
$|T_{i} \cap (X_{p})_{i}| = 1$. In addition, we say, that $T
\subseteq A$ is a transversal set of $X$ iff $T = \langle T_{i}:
i \in I \rangle/{\cal F}$ is decomposable and $\{i \in I: T_{i}$
is a transversal set at $i \} \in {\cal F}$.

\begin{definition}
\label{septree} Let $n \in \omega$. A function $Z: {}^{\leq n}2
\rightarrow {\cal P}(A_{i})$ is defined to be a separating tree
of height $n$ in ${\cal G}_{i}$ iff
for all $p \not=q \in {}^{<n}2$ and $i \in 2$ we have \\[2mm]
\indent (a) $Z_{p^{\frown}i} \subseteq Z_{p} \not=\emptyset$; \\
\indent (b) $Z_{p^{\frown}0} \cap Z_{p^{\frown}1} = \emptyset$; \\
\indent (c) either $$\Big(\forall a \in
Z_{p^{\frown}0}\Big)\Big(\forall b \in Z_{p^{\frown}1}\Big) \
\langle a,b \rangle \in E({\cal G}_{i}) $$
\indent \indent or
$$\Big(\forall a \in Z_{p^{\frown}0}\Big)\Big(\forall b \in
Z_{p^{\frown}1}\Big) \ \langle a,b \rangle \not\in E({\cal
G}_{i}).$$
\\
Above, and sometimes thereafter we write $Z_{p}$ instead of
$Z(p)$.
\end{definition}

\begin{lemma}
\label{bigtransversal} Let $X = \langle X_{p}: p \in
{}^{<\omega}2 \rangle$ be a decomposed Hausdorff scheme of $A$.
Suppose, there exists $c \in \Re^{+}$ such that
$$ I_{0}:=\{i \in I: \delta_{i}(X) \geq c \cdot log(|A_{i}|) \} \in {\cal F}.$$ Then
there exists a big $T\subseteq A$ which is a transversal set of
$X$.
\end{lemma}

\begin{proof}
Clearly, for each $i \in I_{0}$ there exists $T_{i} \subseteq
A_{i}$ such that for all $p \in {}^{\delta_{i}(X)}2$ we have
$|T_{i} \cap (X_{p})_{i} | = 1$. Then, by definition, $T_{i}$ is
a transversal set at $i$, hence $T:=\langle T_{i}: i \in I
\rangle /{\cal F}$ is a transversal set of $X$. It remains to
show, that $T$ is big. \\
\indent To do so, let $i \in I_{0}$ be arbitrary, and observe,
that $$|T_{i}| = 2^{\delta_{i}(X)} \geq 2^{ c \cdot log(|A_{i}|)}
= |A_{i}|^{c}.$$ Consequently, $T$ is big, as desired.
\end{proof}

\begin{lemma}
\label{edgelemma}
Suppose $\langle f_{\alpha} \in {}^{I}\omega: \alpha <
lcf(\aleph_{0},{\cal F}) \rangle$ is a sequence of unbounded
functions satisfying the consequences of Lemma \ref{ladderlemma}.
Suppose, that if $X \subseteq V({\cal G})$ is decomposable and
${\cal G}|_{X}$ is either a complete or empty graph, then $X$ is
small. \\
\indent Then there exists  $\alpha < \lambda:= lcf(\aleph_{0},
{\cal F})$ with the following property. If $X \subseteq V({\cal
G})$ is big, then there exists
$X' \subseteq X$ such that \\
\indent (i) $X'$ is big; \\
\indent (ii) if $V,W \subseteq X'$ are disjoint and
$\alpha$-close to $X'$, then there are $a,a' \in V$ and $b,b' \in
W$ such that $\langle a,b \rangle \in E({\cal G})$ and $\langle
a',b' \rangle
\not\in E({\cal G})$. \\[1mm]
\indent In addition, the function $\varepsilon: I \rightarrow
\Re,  \ \displaystyle{ \varepsilon(i) := \frac{log(|A_{i}|)}{
f_{\alpha}(i)} }$ is unbounded.
\end{lemma}

\begin{proof}
For each $i \in I$ let us fix a separating tree $Z_{i}$ at ${\cal
G} _{i}$ with largest possible height $g(i)$ (such a separating
tree with largest height exists because $A_{i}$ is finite). For
each $p \in {}^{< \omega}2$ and $i \in I$ let
\[(X_{p})_{i}  =  \left\{ \begin{array}{ll}
                                 Z_{i}(p) & \mbox{if  $Z_{i}(p)$ is defined, } \\
                                 \emptyset  &   \mbox{otherwise.}\\
                            \end{array}
                    \right. \]
Then $X:=\langle (X_{p})_{i}: i \in I \rangle /{\cal F}: p \in
{}^{< \omega}2 \rangle$ is a decomposed Hausdorff scheme such
that for all $i \in I$ we have $\delta_{i}(X) = g(i)$. \\
\indent We claim, that for all $c \in \Re^{+}$ we have $$(*)
\indent \{i \in I: g(i) \leq c\cdot log(|A_{i}|) \} \in {\cal
F}.$$ To see this, first we define a coloration $t_{i}: {}^{<
g(i)}2 \rightarrow 2$ of the non-terminal nodes of $Z_{i}$ as
follows. For any $i \in I$ and $p \in {}^{< g(i)}2$ let $t_{i}(p)
= 0 $ iff $Z_{i}$ satisfies the first case of Definition
\ref{septree} (c) and let $t_{i}(p) = 1$ otherwise. Then, by (the
proof of) Lemma 6.7.9 of \cite{Hodges} there exists a complete
subtree $Z_{i}'$ of $T_{i}$ with height at least $g(i)/2-1$ which
is monochromatic under $t_{i}$. Let $t'_{i}$ be the color for
which $T_{i}'$ is monochromatic. Then there exists $t \in 2$ and
$J \in
{\cal F}$ such that for all $i \in J$ we have $t_{i}'=t$. \\
\indent Now, assume, seeking a contradiction, that $(*)$ does not
hold, that is, there exists $c \in \Re^{+}$ such that $$L:=\{i
\in I: \delta_{i}(X) \geq c \cdot log(|A_{i}|) \} \in {\cal F}.$$
For each $p \in {}^{< \omega}2$ and $i \in I$ let
\[(X_{p}')_{i}  =  \left\{ \begin{array}{ll}
                                 Z_{i}'(p) & \mbox{if  $Z_{i}'(p)$ is defined, } \\
                                 \emptyset  &   \mbox{otherwise.}\\
                            \end{array}
                    \right. \]
Then $X':=\langle (X_{p}')_{i}: i \in I \rangle /{\cal F}: p \in
{}^{< \omega}2 \rangle$ is a decomposed Hausdorff schema such
that for all $i \in L \cap J \cap \{i \in I: c \cdot log(|A_{i}|)
\geq 6 \} \in {\cal F}$ we have
$$\delta_{i}(X') \geq \frac{\delta_{i}(X)}{2}-1 \geq \frac{c \cdot
log(|A_{i}|)}{2}-1 \geq \frac{c \cdot log(|A_{i}|)}{3}.$$ Hence,
by Lemma \ref{bigtransversal} there exists a big set $T= \langle
T_{i}: i \in I \rangle/{\cal F}$ transversal to $X'$. It follows,
that for any $i \in
J$, if $t=0$ then $T_{i}$ spans a complete subgraph of ${\cal
G}_{i}$ and if $t=1$ then $T_{i}$ spans an empty subgraph of
${\cal G}_{i}$. Since $T$ is big, both cases contradict to the
assumptions of the Lemma. Hence
$(*)$ holds, as desired. \\
\indent Because of $(*)$, the function $i \mapsto
\frac{log(|A_{i}|)}{g(i)}$ is unbounded. Therefore, there exists
$\alpha < \lambda$ such that $$I_{0}:=\{i \in I: f_{\alpha}(i)
\leq \sqrt{\frac{log(|A_{i}|)}{g(i)}} \} \in {\cal F}.$$ We
claim, that this $\alpha$ satisfies the conclusion of the lemma.
First observe, that the function $\varepsilon(i)=
\frac{log(|A_{i}|)}{f_{\alpha}(i)}$ is unbounded, because for all
$i \in I_{0}$ we have
$$\frac{log(|A_{i})|)}{f_{\alpha}(i)} \geq
\sqrt{g(i) \cdot log(|A_{i}|)}$$ and the latter
function is clearly unbounded (because the second factor of the right hand side is unbounded by assumption). \\
\indent To check the remaining part of the statement of the
present lemma, let $X \subseteq A$ be big. A separating tree
$Z_{i}:{}^{\leq n}2 \rightarrow {\cal P}(A_{i})$ at ${\cal G}_{i}$
will be called $\alpha$-large iff $Z_{i}(\langle \rangle)=X_{i}$
and for all $p \in {}^{<n}2$ and $r \in 2$ we have
$\mu_{i}(Z_{i}({p^{\frown}r})) \geq \mu_{i}(Z_{i}(p)) -
\frac{f_{\alpha}(i)}{log(|A_{i}|)}$. For each $i \in I$ choose an
$\alpha$-large separating tree $Z_{i}$ at ${\cal G}_{i}$ with
largest possible height $h(i)$. Since the height of $Z_{i}$ is
largest possible, it follows, that for any $i \in I$ there exists
$p_{i}
\in {}^{h(i)}2$ such that \\
\\
\indent $(**)$ \ \ \  if $V,W \subseteq Z_{i}(p_{i})$ are disjoint
and $$\mu_{i}(V) \geq \mu_{i}(Z_{i}(p)) -
\frac{f_{\alpha}(i)}{log(|A_{i}|)}, \ \ \mu_{i}(W) \geq
\mu_{i}(Z_{i}(p)) - \frac{f_{\alpha}(i)}{log(|A_{i}|)}$$ then
there are $a,a' \in A$ and $b,b' \in B$ such that $\langle a,b
\rangle \in E({\cal G}_{i})$ and $\langle a',b' \rangle \not\in
E({\cal G}_{i})$. \\
\\
Let $X_{i}'=Z_{i}(p_{i})$ and let $X' = \langle X_{i}': i \in I
\rangle /{\cal F}$. We claim, that this $X'$ satisfies the
conclusion of the present Lemma. Combining $(**)$ and the {\L}o\'s
Lemma, it follows, that $X'$ satisfies (ii) of the present lemma.
It remains to show (i). $X'$ is big because $X$ is big and hence
$$I_{1}:= \{i \in I_{0}: \ \sqrt{\frac{g(i)}{log(|A_{i}|)}} \leq
\frac{\mu_{i}(X)}{2} \} \in {\cal F}.$$ Moreover, for all $i \in
I_{1} \subseteq I_{0}$ we have
$$ \mu_{i}(X') \geq \mu_{i}(X) - \frac{h(i)
f_{\alpha}(i)}{log(|A_{i}|)} \geq
\mu_{i}(X)-\frac{g(i)f_{\alpha}(i)}{log(|A_{i}|)} \ \stackrel{i
\in I_{0}}{\geq}
$$
$$ \mu_{i}(X)-\frac{g(i)}{log(|A_{i}|)} \cdot
\sqrt{\frac{log(|A_{i}|)}{g(i)}} = \mu_{i}(X) -
\sqrt{\frac{g(i)}{log(|A_{i})}} \ \stackrel{i \in I_{1}}{\geq}
\frac{\mu_{i}(X)}{2}.$$

\end{proof}

The following definition is motivated by Keisler's limit
ultrapower and limit ultraproduct constructions, see
\cite{limitultrapow} and \cite{limitultraprod}. Our version below
is slightly different, so instead of recalling Keisler's original
construction, below we are examining our variant which seems to
be more adequate for the purposes of the present paper.

\begin{definition}
Suppose $n \in \omega$ and $A \supseteq Y_{k} = \langle
(Y_{k})_{i}: i \in I \rangle /{\cal F}$ are decomposed relations
for all $ k < n$. Then the canonical equivalence relation
$$\Theta = \Theta(\langle \langle {\cal G}_{i},
(Y_{k})_{i}\rangle_{k < n}: i \in I \rangle)$$ is defined to be $$
\{ \langle i,j \rangle \in I \times I: {\cal G}_{i} = {\cal
G}_{j}, \ (\forall k < n)((Y_{k})_{i}=(Y_{k})_{j}) \}.$$ In
addition, if \ $\hat{\ }$ is a choice function on $A$ and $\Theta$
is an equivalence relation on $I$ then the substructure ${\cal
A}_{|\Theta}$ is defined to be

$$ {\cal A}_{|\Theta} = \{s \in A: (\exists J \in {\cal
F})(\forall i, j \in J)(\langle i,j \rangle \in \Theta
\Rightarrow \hat{s}(i) = \hat{s}(j)) \}.$$ Note, that ${\cal
A}_{|\Theta}$ does not depend on the particular choice of \
$\hat{\ }$.
\end{definition}

\begin{lemma}
\label{elemlemma} Suppose $U = \langle U_{i}: i \in I \rangle
/{\cal F}$ and $V = \langle V_{i}: i \in I \rangle /{\cal F}$ are
decomposed subsets of $A$. Suppose $\Theta$ is an equivalence
relation on $I$ contained by the the canonical equivalence
relation of $\langle \langle {\cal A}_{i}, U_{i}, V_{i}\rangle: \
i \in I \rangle$.
\\[1mm]
\indent (1) If there exist $a \in U$ and $b \in V$ such that
$\langle a,b \rangle \in E({\cal A})$ then there exists $a' \in U
\cap A_{|\Theta}, b' \in V \cap A_{|\Theta}$ with $\langle
a',b'\rangle \in E({\cal A})$. \\[1mm]
\indent (2) If there exist $a \in U$ and $b \in V$ such that
$\langle a,b \rangle \not\in E({\cal A})$ then there exists $a'
\in U \cap A_{|\Theta}, b' \in V \cap A_{|\Theta}$ with $\langle
a',b'\rangle \not \in E({\cal A})$. \\
\end{lemma}

\begin{proof}
First we show (1). Let $\hat{\ }$ be any choice function on
${\cal A}$. Assume $a \in U$ and $b \in V$ are such that $\langle
a,b \rangle \in E({\cal A})$. Let $$J:= \{i \in I: \langle
\hat{a}_{i},\hat{b}_{i} \rangle \in E({\cal A}_{i}), \ \hat{a}_{i}
\in U_{i}, \ \hat{b}_{i} \in V_{i} \}.$$ Because of the
assumptions, $J \in {\cal F}$. For any $e \in I/{\Theta}$ let
$e^{*} \in e$ be a representative of $e$ such that if $e \cap J
\not=\emptyset$ then $e^{*} \in J$ also holds. For any $i \in I$
let
\[a'_{i}  =  \left\{ \begin{array}{ll}
                                 \hat{a}_{(i/{\Theta})^{*}} & \mbox{if  $(i/{\Theta})^{*} \in J$, } \\
                                 \hat{a}_{i}  &   \mbox{otherwise}\\
                            \end{array}
                    \right. \]
and similarly, let
\[b'_{i}  =  \left\{ \begin{array}{ll}
                                 \hat{b}_{(i/{\Theta})^{*}} & \mbox{if  $(i/{\Theta})^{*} \in J$, } \\
                                 \hat{b}_{i}  &   \mbox{otherwise.}\\
                            \end{array}
                    \right. \]
Finally, let $a' = \langle a_{i}':i \in I \rangle /{\cal F}$ and
let $b' = \langle b_{i}':i \in I \rangle /{\cal F}$.
Then $J$ witnesses $a',b' \in A_{|\Theta}$ and clearly, (by the
assumptions on $\Theta$) we have $a' \in U$, $b' \in V$ and
$\langle a',b' \rangle \in E({\cal A})$.
This completes the proof of (1). \\
\indent (2) can be proved completely similarly; as an alternative
proof, one can apply (1) directly to the complementer graph of
${\cal A}$.
\end{proof}

\begin{lemma}
\label{countlemma} Suppose $\Theta$ is an equivalence relation on
$I$ with countably many equivalence classes which is contained in
the canonical equivalence relation of $\langle {\cal A}_{i}: i
\in I \rangle$. Then $|{\cal A}_{| \Theta}| \leq 2^{\aleph_{0}}$.
\end{lemma}

\begin{proof}
For any $e \in
 I/{\Theta}$ let $e^{*} \in e$ be a fixed representative.
Let
$${\cal B} = \prod_{e \in I/{\Theta}} {\cal A}_{e^{*}}.$$ Since $\Theta$ has countably many equivalence classes,
$|B| \leq 2^{\aleph_{0}}$. For any $s \in B$ and $i \in I$ let
$s'_{i} = s_{i /{\Theta}}$ and let $f(s) = \langle s'_{i}: i \in
I \rangle /{\cal F}$. Then it is easy to check, that $f: {\cal B}
\rightarrow {\cal A}_{|\Theta}$ is well defined and surjective;
this completes the proof.
\end{proof}

Recall, that an ultrafilter ${\cal F}$ is defined to be {\em
$\kappa$-regular} iff it contains a point-finite subset of
cardinality $\kappa$, that is, ${\cal F}$ is $\kappa$-regular iff
there exists $E \in [{\cal F}]^{\kappa}$ such that for all $i \in
I$ the set $\nu(i):=\{e \in E: i \in e \}$ is finite. For further
details we refer the reader to Section 4.3 of \cite{chk}. \\
\indent Recall also, that for a cardinal $\kappa$ the ultrafilter
${\cal F}$ is defined to be $\kappa$-good, if for all $f:
[\kappa]^{< \aleph_{0}} \rightarrow {\cal F}$ there exists $g:
[\kappa]^{< \aleph_{0}} \rightarrow {\cal F}$ such that for all
$s,z \in [\kappa]^{< \aleph_{0}}$ we have $g(s) \subseteq f(s)$
and $g(s \cup z) = g(s) \cap g(z)$. The function $g$ is called an
``additive refinement'' of $f$. We refer to \cite{chk} or
\cite{clasth} as standard references for good ultrafilters. \\
\indent Lemma \ref{flexilemma} below is a special case of Lemmas
8.7 and 8.8 of \cite{Malliaris3} which uses a quite different
terminology. Therefore we include here (a rather standard) proof.
For closely related results we refer to the more recent
\cite{Malliaris4}, as well.

\begin{lemma}
\label{flexilemma} Suppose ${\cal F} \subseteq {\cal P}(I)$ is
$\aleph_{1}$-incomplete and $\kappa^{+}$-good. Then ${\cal F}$ is
$\kappa$-flexible, that is, if $f \in {}^{I}\omega$ is unbounded
modulo ${\cal F}$ then there exists a point finite $E \in [{\cal
F}]^{\kappa}$ such that, for any $i \in I$ we have
$$|\{e \in E: i \in e \}| \leq f(i).$$
\end{lemma}

\begin{proof}
Let $f \in {}^{I}\omega$ be an unbounded function (modulo ${\cal
F}$). Since ${\cal F}$ is countably incomplete, there exists a
decreasing sequence $\langle I_{n}: n \in \omega \rangle$ such
that $I_{n} \in {\cal F}$ for all $n \in \omega$ and $\cap_{n \in
\omega} I_{n} = \emptyset$. For any $w \in
[\kappa]^{<\aleph_{0}}$ let
$$h(w) = I_{|w|} \cap \{i \in I: f(i) \geq |w| \} .$$ Since
${\cal F}$ is $\kappa^{+}$-good, it follows, that there exists an
additive refinement $g: [\kappa]^{< \omega} \rightarrow {\cal F}$
of $h$. In addition, for any $i \in I$ let $\nu(i) = \{\alpha \in
\kappa: i \in g(\{\alpha\}) \}$. Let $i \in I$ be fixed, and
assume $\alpha_{0},...,\alpha_{n-1} \in \nu(i)$. Then
$$(*) \indent i \in g(\{\alpha_{0}\}) \cap ... \cap g(\{\alpha_{n-1}\}) =
g(\{\alpha_{0},...,\alpha_{n-1}\}) \subseteq
h(\{\alpha_{0},...,\alpha_{n-1}\}).$$ Combining this with the
definition of $h$, we obtain $f(i) \geq n$. For any $\alpha <
\kappa$ let $e_{\alpha} = g(\{ \alpha \})$. Then $(*)$ shows,
that $\alpha \mapsto e_{\alpha}$ is a finite-to-one mapping,
hence $E := \{ e_{\alpha}: \alpha < \kappa \} \in [{\cal
F}]^{\kappa}$. It is also easy to see, that by $(*)$, $E$ is
point finite, moreover, satisfies the other part of the statement.
\end{proof}

\begin{definition} Suppose $X_{0},...,X_{n-1}
\subseteq A$ are decomposable.
Then the point $a =\langle a_{i}: i \in I \rangle / {\cal F}\in
A$ is defined to be $ \langle X_{0},...,X_{n-1} \rangle$-generic
iff there exists $c \in \Re^{+}$ such that for all $j < n $ we
have
$$\{ i \in I: |(X_{j})_{i} \cap
\Gamma(a_{i})|, \ \  |(X_{j})_{i} - \Gamma(a_{i})| \ \ \geq c
\cdot |(X_{j})_{i}| \} \in {\cal F}.$$
\end{definition}

\begin{lemma}
\label{genericlemma} Let $\lambda := lcf(\aleph_{0},{\cal F})$.
Suppose $\langle f_{\alpha} \in {}^{I}\omega: \alpha < \lambda
\rangle$ is a sequence of unbounded functions satisfying the
consequences of Lemma \ref{ladderlemma}. Assume ${\cal F}$ is
$\aleph_{1}$-incomplete
and $(2^{\aleph_{0}})^{+}$-good. \\
\indent Suppose $X'$ and $\alpha$ satisfy (i) and (ii) of Lemma
\ref{edgelemma}. Suppose $Y_{0},...,Y_{n} \subseteq X'$ are
disjoint, decomposable sets such that there exists $c \in \Re^{+}$
with
$$I':=\{i \in I: |(Y_{0})_{i}|,...,|(Y_{n})_{i}| \geq c \cdot
|X'_{i}| \} \in {\cal F}.$$ Then \\
\\
\centerline{$(*)$ \indent for all $\beta < \lambda$ there exists
$a_{\beta} \in Y_{n}$ such that for all $j < n $ we have}\\[2mm]
\centerline{$\displaystyle{ \{ i \in I: \ |(Y_{j})_{i} \cap
\Gamma((\hat{a}_{\beta})_{i})| \ \geq \ \frac{1}{
f_{\beta}(i)} \cdot |(Y_{n})_{i}| \} \in {\cal F}}$ and} \\[2mm]
\centerline{ $\displaystyle{ \{i \in I: \  |(Y_{j})_{i} -
\Gamma((\hat{a}_{\beta})_{i})| \ \geq \ \frac{1}{ f_{\beta}(i)}
\cdot |(Y_{n})_{i}| \} \in {\cal F}}$.}
%
%
\end{lemma}

\begin{proof}
Fix a choice function $\hat{\ }:A \rightarrow \prod_{i \in I}
A_{i}$ and for all $j \leq n$ fix decompositions $Y_{j} = \langle
(Y_{j})_{i}: i \in I \rangle /{\cal F}$.
Assume, seeking a contradiction, that $(*)$ is not true and fix
$\beta < \lambda$ showing this. For all $i \in I$ and $j < n$
define $V(i,j,0), V(i,j,1) \subseteq A_{i}$ to be
$$V(i,j,0):= \{a \in (Y_{n})_{i}: |(Y_{j})_{i} \cap \Gamma (a)| \ <
\ \frac{1}{ f_{\beta}(i)} \cdot |(Y_{n})_{i}| \}$$ and
$$V(i,j,1):=
\{a \in (Y_{n})_{i}: |(Y_{j})_{i} - \Gamma(a)| \ < \ \frac{1}{
f_{\beta}(i)} \cdot |(Y_{n})_{i}| \}.$$ Let $I_{0}:= \{i \in I':
(Y_{n})_{i} = \cup_{j < n,k \in 2} V(i,j,k) \}$. Then $I_{0} \in
{\cal F}$ (because otherwise, for any $i \in I-I_{0} \in {\cal
F}$ there would exist $a_{i} \in (Y_{n})_{i} - \cup_{j < n,k \in
2} V(i,j,k) $ so for all $i
\in I-I_{0}$ and $j < n$ we would have \\
\\
\centerline{$ \displaystyle{|(Y_{j})_{i} \cap \Gamma(a_{i})| \
\geq \ \frac{|(Y_{n})_{i}|}{ f_{\beta}(i)} } $ \ \ \ and \ \ \
$\displaystyle{|(Y_{j})_{i} - \Gamma(a_{i})| \ \geq \
\frac{|(Y_{n})_{i}|}{ f_{\beta}(i)} }$} \\
\\
contradicting to our indirect assumption). By elementary
counting, for any $i \in I_{0}$ there exist $j_{i} < n$ and
$k_{i} \in 2$ with $|V(i,j_{i},k_{i})| \geq
\frac{1}{2n}|(Y_{n})_{i}|$. In addition, there exist $j^{*} < n$
and $k^{*} \in 2$ such that $$I_{1}:=\{i \in I_{0}: j^{*}=j_{i},
k^{*}=k_{i} \} \in {\cal F}.$$ \indent For any $i \in I$ let
$g(i)$ be the largest integer number which is smaller (or equal)
with $\sqrt{f_{\beta}(i)}$. By the assumptions of the present
lemma, Lemma \ref{flexilemma} may be applied: there exists a
family $\{e_{\beta}:  \beta < 2^{\aleph_{0}}\} \subseteq {\cal
F}$ such that for all $i \in I$ we have
$$ |\{ \beta < 2^{\aleph_{0}}: i \in e_{\beta} \}| \leq g(i).$$
Let $\Theta$ be the canonical equivalence relation of $\langle
{\cal A}_{i}, \ V(i,j^{*},k^{*}), \ (Y_{j^{*}})_{i} \rangle)_{i
\in I}$. By Lemma \ref{countlemma} we have $|{\cal A}_{|\Theta} |
\leq 2^{\aleph_{0}}$. Fix an enumeration $A_{|\Theta} =
\{a_{\gamma}: \gamma < 2^{\aleph_{0}} \}$. \\
For all $i \in I_{1}$ define $W_{i} \subseteq A_{i}$ as follows
\[W_{i} :=  \left\{ \begin{array}{ll}
                                 (Y_{j^{*}})_{i} - \cup \{ \Gamma((\hat{a}_{\gamma})_{i}): \gamma < 2^{\aleph_{0}}, \ i \in e_{\gamma}, \ (\hat{a}_{\gamma})_{i} \in V(i,j^{*},k^{*}) \} & \mbox{if  $k^{*}=0 $, }
                                 \\[2mm]
                                 (Y_{j^{*}})_{i}  - \cup \{ (Y_{j^{*}})_{i} - \Gamma((\hat{a}_{\gamma})_{i}): \gamma < 2^{\aleph_{0}}, \ i \in e_{\gamma}, \ (\hat{a}_{\gamma})_{i} \in V(i,j^{*},k^{*}) \} &   \mbox{if $k^{*}=1$.}\\
                            \end{array}
                    \right. \]
Next, we show, that \\
\\
\centerline{for any $i \in I_{2}:= I_{1} \cap \{i \in I:
\frac{|(Y_{n})_{i}|}{\sqrt{f_{\beta}(i)}} \leq
\frac{1}{2}|(Y_{j^{*}})_{i}| \}$ we have $|W_{i}| \geq
\frac{1}{2}|(Y_{j^{*}})_{i}|$.} \\
\\
To check this, fix $i \in I_{2}$.
We proceed by a case distinction. \\
{\bf Case 1:} $k^{*}=0$. Then, on one hand, for any $b \in
V(i,j^{*},0)$ we have
$$|(Y_{j^{*}})_{i} \cap \Gamma(b)| \leq \frac{1}{f_{\beta}(i)}
|(Y_{n})_{i}|$$ and on the other hand $$|\{\gamma <
2^{\aleph_{0}}: i \in e_{\gamma}\}| \leq \sqrt{f_{\beta}(i)}.$$
Combining the last two estimations, we obtain
$$|\cup \{ (Y_{j^{*}})_{i} \cap \Gamma((\hat{a}_{\gamma})_{i}): \gamma < 2^{\aleph_{0}}, \  i \in
e_{\gamma}, \ (\hat{a}_{\gamma})_{i} \in V(i,j^{*},0) \}| \leq $$
$$ \frac{\sqrt{f_{\beta}(i)}}{f_{\beta}(i)}|(Y_{n})_{i}| \ \
\stackrel{i \in I_{2}}{\leq} \ \ \frac{1}{2}|(Y_{j^{*}})_{i}|.$$
Therefore $$|W_{i}| = |(Y_{j^{*}})_{i} - \cup \{
\Gamma((\hat{a}_{\gamma})_{i}): \gamma < 2^{\aleph_{0}}, \  i \in
e_{\gamma}, \ (\hat{a}_{\gamma})_{i} \in V(i,j^{*},k^{*}) \}| =
$$
$$|(Y_{j^{*}})_{i} - \cup \{ (Y_{j^{*}})_{i} \cap
\Gamma(\hat{a}_{\gamma})_{i}: \gamma < 2^{\aleph_{0}}, \ i \in
e_{\gamma}, \ (\hat{a}_{\gamma})_{i} \in V(i,j^{*},k^{*}) \}| \geq
$$
$$\frac{1}{2}|(Y_{j^{*}})_{i}|, $$ as
desired. \\
{\bf Case 2:} $k^{*}=1$. This case can be treated analogously. On
one hand, for any $b \in V(i,j^{*},1)$ we have
$$|(Y_{j^{*}})_{i} - \Gamma(b)| \leq \frac{1}{f_{\beta}(i)}
|(Y_{n})_{i}|$$ and on the other hand$$|\{\gamma <
2^{\aleph_{0}}: i \in e_{\gamma}\}| \leq \sqrt{f_{\beta}(i)}.$$
Combining the last two estimations, we obtain
$$|\cup \{ (Y_{j^{*}})_{i} - \Gamma((\hat{a}_{\gamma})_{i}): \gamma < 2^{\aleph_{0}}, i \in
e_{\gamma}, (\hat{a}_{\gamma})_{i} \in V(i,j^{*},1) \}| \leq $$ $$
\frac{\sqrt{f_{\beta}(i)}}{f_{\beta}(i)}|(Y_{n})_{i}| \ \
\stackrel{i \in I_{2}}{\leq} \ \ \frac{1}{2}|(Y_{j^{*}})_{i}|.$$
Therefore $$|W_{i}| = |(Y_{j^{*}})_{i} - \cup \{ (Y_{j^{*}})_{i} -
\Gamma((\hat{a}_{\gamma})_{i}): \gamma < 2^{\aleph_{0}}, i \in
e_{\gamma}, (\hat{a}_{\gamma})_{i} \in V(i,j^{*},k^{*}) \}| \geq
$$
$$\frac{1}{2}|(Y_{j^{*}})_{i}|,
$$ as
desired. \\
Summing up, $V:= \langle V(i,j^{*},k^{*}): i \in I_{2} \rangle
/{\cal F}$ and $W:= \langle W_{i}: i \in I_{2} \rangle /{\cal F}$
satisfy the following: \\
\indent $\bullet$ $V \cap W = \emptyset$ (because $V \subseteq
Y_{n}$ and $W \subseteq Y_{j^{*}}$); \\
\indent $\bullet$ there exist $c_{v},c_{w} \in \Re^{+}$ such that
\\
\\
\centerline{$I_{V}:=\{i \in I: |V(i,j^{*},k^{*})| \geq c_{v} \cdot
|(X)_{i}'| \} \in {\cal F}$ and} \\[1mm] \centerline{$I_{W}:=\{i \in I:
|W_{i}| \geq c_{w} \cdot |X_{i}'| \} \in
{\cal F}$.} \\
\\
This implies, that $V$ and $W$ are $\alpha$-close to $X'$ because
of the following. By Lemma \ref{edgelemma}, the function
$\varepsilon: I \rightarrow \Re,  \ \displaystyle{ \varepsilon(i)
:= \frac{log(|A_{i}|)}{ f_{\alpha}(i)} }$ is unbounded. Hence,
for any $i \in I_{V}$ we have $$ \mu_{i}(V(i,j^{*},k^{*}))
 \ = \ \frac{log (|V(i,j^{*},k^{*})|)}{log(|A_{i}|)} \ \geq \ \frac{log(c_{v} \cdot
 |(X)_{i}'|)}{log(|A_{i}|)}
\ \geq $$ $$ \mu_{i}(|X_{i}'|) + \frac{log(c_{v}) }{log(|A_{i}|)}
\ \geq \
 \mu_{i}(|X_{i}'|) - \frac{f_{\alpha}(i)}{log(|A_{i}|)}$$
and similarly for $W$. \\
\indent $\bullet$ In addition, if $k^{*}=0$ then for any $a \in
V, \ \ b \in W$ we have $\langle a,b \rangle \not\in E({\cal G})$
because of the following. Suppose, seeking a contradiction, that
$a \in V, b \in W$ and $\langle a,b \rangle \in E({\cal G})$. Let
$\Theta^{*}$ be the canonical equivalence relation of $\langle
{\cal A}_{i}, \ V(i,j^{*},k^{*}), \ W_{i}, \ (Y_{j^{*}})_{i}
\rangle)_{i \in I}$. Clearly, $\Theta^{*} \subseteq \Theta$.
Then, we apply Lemma \ref{elemlemma} (1) to $V,W$ and
$\Theta^{*}$ and obtain
$a' \in V\cap A_{|\Theta}, b' \in W \cap A_{|\Theta}$ such that
$\langle a',b' \rangle \in E({\cal G})$. By construction, there
exists $\gamma < 2^{\aleph_{0}}$ such that $a' = a_{\gamma}$ and
$b' \in \Gamma(a_{\gamma})$, particularly, for all
$$i \in e_{\gamma} \cap \{i \in I: \langle \hat{a}'_{i},\hat{b}'_{i} \rangle \in E({\cal G}_{i}), \ \hat{a}_{i}' \in V(i,j^{*},k^{*}) \} \in {\cal F}$$
we have $\hat{b}'_{i} \not \in W_{i}$, so $b' \not \in W$, which
is a contradiction. Completely similarly, if $k^{*}=1$, then for
any $a \in V, b \in W$ we have $\langle a,b \rangle \in E({\cal
G})$. So either there are no edges between $V$ and $W$ (this
holds if $k^{*}=0$), or there are no non-edges between $V$
and $W$ (this holds if $k^{*}=1$). \\
\indent These stipluations together contradict to the assumption,
that $X'$ satisfies (i) and (ii) of Lemma \ref{edgelemma}. This
contradiction completes the proof.
\end{proof}

\begin{lemma}
\label{generic1lemma} Let $\lambda := lcf(\aleph_{0},{\cal F})$.
Suppose $\langle f_{\alpha} \in {}^{I}\omega: \alpha < \lambda
\rangle$ is a sequence of unbounded functions satisfying the
consequences of Lemma \ref{ladderlemma}. Assume ${\cal F}$ is
$\aleph_{1}$-incomplete, $\lambda$-regular
and $(2^{\aleph_{0}})^{+}$-good. \\
\indent Suppose $X'$ and $\alpha$ satisfy (i) and (ii) of Lemma
\ref{edgelemma}. Suppose $Y_{0},...,Y_{n} \subseteq X'$ are
disjoint, decomposable sets such that there exists $c \in \Re^{+}$
with
$$I_{0}:=\{i \in I: |(Y_{0})_{i}|,...,|(Y_{n})_{i}| \geq c \cdot
|X'_{i}| \} \in {\cal F}.$$ Then there exists $a \in Y_{n}$ which
is $ \langle Y_{0},...,Y_{n-1} \rangle$-generic.
\end{lemma}

\begin{proof}
Fix a choice function $\hat{\ }:A \rightarrow \prod_{i \in I}
A_{i}$. By assumption, ${\cal F}$ is $\lambda$-regular: there
exists a family $\{J_{\beta}': \beta < \lambda \} \subseteq {\cal
F}$ such that for all $i \in I$ the sets $\{\beta < \lambda: i \in
J_{\beta}' \}$ are finite. Shrinking $J_{\beta}'$ if necessary,
we may (and will) assume, that for all $i \in J_{\beta}'$ we have
$f_{\beta}(i) \geq 1$. By $(*)$ of Lemma \ref{genericlemma}, for
all $\beta < \lambda$
there exists $a_{\beta} \in Y_{n}$ such that for all $j < n $ we have \\[2mm]
\centerline{$\displaystyle{ J''_{\beta}:=\{ i \in I: \
|(Y_{j})_{i} \cap \Gamma((\hat{a}_{\beta})_{i})| \ \geq \
\frac{1}{f_{\beta}(i)} \cdot |(Y_{n})_{i}| \} \in {\cal F}}$ and}
\centerline{ $\displaystyle{ J'''_{\beta}:=\{i \in I: \
|(Y_{j})_{i} - \Gamma((\hat{a}_{\beta})_(i))| \ \geq \
\frac{1}{f_{\beta}(i)} \cdot |(Y_{n})_{i}| \} \in {\cal F}}$.} \\
\\
For all $\beta < \lambda$ let $$J_{\beta}:=J'_{\beta} \cap
J''_{\beta} \cap J'''_{\beta} \cap \{i \in I:
(\hat{a}_{\beta})_{i} \in (Y_{n})_{i} \}$$ and for any $i \in I$
let $\nu(i):= \{ \beta < \lambda: i \in J_{\beta} \}$. By
construction, for each $i \in I$ we have
$$|\nu(i)| \leq |\{\beta \in \lambda: i \in J'_{\beta} \}|$$ 
hence each $\nu(i)$ is finite. Note, that by the ``shrinking
step'' of the construction of the $J_{\beta}'$, it follows, that
for all $i \in I$ and $\rho \in \nu(i)$ we have $f_{\rho}(i) \geq
1$. For each $i\in I$ let $\varrho(i) \in \nu(i)$ be such that
$$f_{\varrho(i)}(i) = min \{f_{\beta}(i): \beta \in \nu(i) \} \ \ \ (\geq 1)$$
and let $a_{i} = \hat{a}_{\varrho(i)}(i)$. Clearly, $a:= \langle
a_{i}: i \in I \rangle /{\cal F} \in Y_{n}$. We claim, that \\
\\
\indent $(**)$ \indent for all $\beta < \lambda$ and
$j < n $ we have \\[2mm]
\centerline{$\displaystyle{ \{ i \in I: \ |(Y_{j})_{i} \cap
\Gamma(a_{i})| \ \geq \ \frac{1}{f_{\beta}(i)} \cdot
|(Y_{n})_{i}| \} \in {\cal F}}$ and} \centerline{ $\displaystyle{
\{i \in I: \ |(Y_{j})_{i} - \Gamma(a_{i})| \ \geq \
\frac{1}{f_{\beta}(i)} \cdot |(Y_{n})_{i}| \} \in {\cal F}}$.} \\
\\
To show this, fix $\beta < \lambda$ and $j < n$. Let $i \in
J_{\beta}$ be arbitrary. Then $\beta \in \nu(i)$ and hence
$$|(Y_{j})_{i} \cap \Gamma(a_{i})| = |(Y_{j})_{i} \cap
\Gamma((\hat{a}_{\varrho(i)})_{i})| \geq
 \frac{1}{f_{\varrho(i)}(i)} \cdot
|(Y_{n})_{i}| \geq  \frac{1}{f_{\beta}(i)} \cdot |(Y_{n})_{i}|$$
and similarly,
$$ |(Y_{j})_{i} - \Gamma(a_{i})| =
|(Y_{j})_{i} - \Gamma((\hat{a}_{\varrho(i)})_{i})| \geq
 \frac{1}{f_{\varrho(i)}(i)} \cdot
|(Y_{n})_{i}| \geq  \frac{1}{f_{\beta}(i)} \cdot |(Y_{n})_{i}|.$$
So $(**)$ has been established. \\
\indent For any $i \in I$ let $$\gamma_{i}:= max \{
\frac{|(Y_{n})_{i}|}{|(Y_{j})_{i} \cap \Gamma((\hat{a})_{i})|}, \
\frac{|(Y_{n})_{i}|}{|(Y_{j})_{i} - \Gamma((\hat{a})_{i})|}: \ \
j < n \}.$$ It follows from $(**)$ that for any $\beta < \lambda$
we have $\{i \in I: \gamma_{i} < f_{\beta}(i) \} \in {\cal F}$.
Therefore, by the definition of lower cofinality, there exists $m
\in \omega $ with $\{i \in I:  \gamma_{i} \leq m \} \in {\cal
F}$. Increasing $m$ if necessary, we may (and will) assume $m
\geq 1$ (that is, $m \not=0$). So, for all $j < n$ we have
$$\{i \in I: |(Y_{j})_{i} \cap \Gamma((\hat{a})_{i})|, \ \
|(Y_{j})_{i} - \Gamma((\hat{a})_{i})| \geq \frac{1}{m}
|(Y_{n})_{i}| \} \in {\cal F}$$ as desired.

\end{proof}

\begin{theorem}
\label{maintechnicalthm} Let $\lambda := lcf(\aleph_{0},{\cal
F})$. Suppose $\langle f_{\alpha} \in {}^{I}\omega: \alpha <
\lambda \rangle$ is a sequence of unbounded functions satisfying
the consequences of Lemma \ref{ladderlemma}. Assume ${\cal F}$ is
$\aleph_{1}$-incomplete,
$\lambda$-regular and $(2^{\aleph_{0}})^{+}$-good. \\
\indent Suppose $X'$ and $\alpha$ satisfy (i) and (ii) of Lemma
\ref{edgelemma}. Suppose $Y_{0},...,Y_{n-1} \subseteq X'$ are
disjoint, decomposable sets such that there exists $c \in \Re^{+}$
with
$$\{i \in I: |(Y_{0})_{i}|,...,|(Y_{n-1})_{i}| \geq c \cdot
|X'_{i}| \} \in {\cal F}.$$ \indent If ${\cal H} = \langle
n,E({\cal H}) \rangle$ is a (finite) graph on $n$ vertices then
there exists a function $\varrho: n \rightarrow V({\cal G})$ such
that for any $i < n$ we have $\varrho(i) \in Y_{i}$ and $\varrho$
isomorphically embeds ${\cal H}$ into ${\cal G}$.
\end{theorem}

\begin{proof}
We apply induction on the number of vertices of ${\cal H}$. If
${\cal H}$ has only one vertex, then the statement is trivial.
Now assume, that ${\cal H}$ has $n$ vertices and the theorem is
true for any graph having at most $n-1$ vertices. Let
${\cal H}' = {\cal H}_{|(n-1)}$ be the subgraph of ${\cal H}$
induced by its first $n-1$ vertices. By Lemma \ref{generic1lemma}
there exists $a \in Y_{n-1}$ which is $\langle Y_{0},...,Y_{n-2}
\rangle$-generic. For any $i < n-1$ let
\[Y'_{i}  =  \left\{ \begin{array}{ll}
                                 Y_{i} \cap \Gamma(a) & \mbox{if  $\langle i, n-1 \rangle \in E({\cal H}) $, } \\
                                 Y_{i} - \Gamma(a)  &   \mbox{otherwise.}\\
                            \end{array}
                    \right. \]
Since $a$ is $\langle Y_{0},...,Y_{n-2} \rangle$-generic, there
exists $c' \in \Re^{+}$ such that
$$\{i \in I: |(Y_{0}')_{i}|,...,|(Y_{n-2}')_{i}| \geq c' \cdot
|X'_{i}| \} \in {\cal F}.$$ Applying the induction hypothesis to
${\cal H}'$ and to $\langle Y'_{0},...,Y_{n-2}' \rangle$, we
obtain a function $\varrho': (n-1) \rightarrow A$ such that
$\varrho'(i) \in Y'_{i}$ for all $i < n-1$ and $\varrho'$
isomorphically embeds ${\cal H}'$ into ${\cal G}$. Let $\varrho =
\varrho' \cup \{\langle n-1, a \rangle \}$, that is, let
$\varrho$ be the extension of $\varrho'$ that maps the last
vertex of ${\cal H}$ onto $a$. It is easy to see, that $\varrho$
embeds ${\cal H}$ into ${\cal G}$ such that $\varrho(i) \in
Y_{i}$ holds for all $i < n$.
\end{proof}

\begin{lemma}
\label{ultrafilterlemma} There exist a set $I$ and an ultrafilter
${\cal F} \subseteq {\cal P}(I)$ which is $lcf( \aleph_{0},{\cal
F})$-regular, $\aleph_{1}$-incomplete and
$(2^{\aleph_{0}})^{+}$-good.
\end{lemma}

\begin{proof}
By Theorem VI.3.3.12 of \cite{clasth} there exists a regular
ultrafilter ${\cal F}_{0}$ on $(2^{\aleph_{0}})^{+}$ such that
$lcf(\aleph_{0},{\cal F})=(2^{\aleph_{0}})^{+}$. In addition, by
Theorem VI.3.3.1 of \cite{clasth} there exists a good (in fact,
$(2^{\aleph_{0}})^{+}$-good), $\aleph_{1}$-incomplete ultrafilter
${\cal F}_{1}$ on $2^{\aleph_{0}}$. Let $$I:= (2^{\aleph_{0}})^{+}
\times 2^{\aleph_{0}}, \indent {\cal F}:= {\cal F}_{0} \times
{\cal F}_{1}.$$ Then, by Lemma VI.3.3.7 (1) of \cite{clasth},
${\cal F}$ is $(2^{\aleph_{0}})^{+}$-regular (hence, ${\cal F}$
is also $\aleph_{1}$-incomplete) and  Lemma VI.3.3.7 (2) implies,
that ${\cal F}$ is $(2^{\aleph_{0}})^{+}$-good. To complete the
proof, it is enough to show, that $$(*) \indent
lcf(\aleph_{0},{\cal F}) \leq (2^{\aleph_{0}})^{+}.$$
To do so, assume, that $\langle f_{\alpha} \ \in
{}^{(2^{\aleph_{0}})^{+}}\omega: \alpha < (2^{\aleph_{0}})^{+}
\rangle $ is an unbounded sequence of unbounded functions modulo
${\cal F}_{0}$. By Lemma \ref{ladderlemma} we may assume, that for
all $\beta <  \alpha  < (2^{\aleph_{0}})^{+}$ we have
$$(**) \indent \{i \in (2^{\aleph_{0}})^{+}:  f_{\beta}(i) > f_{\alpha}(i) \} \in {\cal F}_{0}.$$
For each $\alpha < (2^{\aleph_{0}})^{+}$ define the function
$f'_{\alpha}: (2^{\aleph_{0}})^{+} \times 2^{\aleph_{0}}
\rightarrow \omega$ to be $f'_{\alpha}(j,i)=f_{\alpha}(j)$ for all
$i \in 2^{\aleph_{0}}$ and $j \in (2^{\aleph_{0}})^{+}$. Clearly,
each $f'_{\alpha}$ is an unbounded function modulo ${\cal F}$. To
prove $(*)$, it is enough to show, that $\langle f'_{\alpha}:
\alpha < (2^{\aleph_{0}})^{+} \rangle$ is unbounded modulo ${\cal
F}$. Let $a \in {}^{(2^{\aleph_{0}})^{+} \times 2^{\aleph_{0}} }
\omega$ be arbitrary which is unbounded modulo ${\cal F}$. Then,
for any $i < 2^{\aleph_{0}}$ there exists $\alpha_{i} <
(2^{\aleph_{0}})^{+}$ such that $$\{j < (2^{\aleph_{0}})^{+}:
f_{\alpha_{i}}(j) \leq a(j,i) \} \in {\cal F}_{0}.$$ Let $\alpha =
sup \{ \alpha_{i}: i < 2^{\aleph_{0}} \}$. Since
$(2^{\aleph_{0}})^{+}$ is regular, $\alpha <
(2^{\aleph_{0}})^{+}$. Then, by $(**)$ we have $f'_{\alpha+1} <
a$ modulo ${\cal F}$, hence $a/{\cal F}$ is not a lower bound of
$\langle f'_{\alpha}: \alpha < (2^{\aleph_{0}})^{+} \rangle$.
Since $a$ was arbitrary, the sequence $\langle f'_{\alpha}:
\alpha < (2^{\aleph_{0}})^{+} \rangle$ is unbounded modulo ${\cal
F}$, hence $(*)$ holds, as desired.
\end{proof}

\begin{theorem}
\label{mainthm} For each finite graph ${\cal H}$ there exists a
constant $c({\cal H}) \in \Re^{+}$ with $c({\cal H}) \leq 1$ such
that for any finite graph ${\cal G}^{*}$ the following holds: if
${\cal G}^{*}$ has $n$ vertices and does not contain complete and
empty induced subgraphs of size $n^{c({\cal H})}$ then ${\cal H}$
can be isomorphically embedded into ${\cal G}^{*}$.
\end{theorem}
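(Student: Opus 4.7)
The strategy is to argue by contradiction and transfer Theorem \ref{maintechnicalthm} back to the finite world via \L o\'s's theorem. Suppose the statement fails for some finite graph ${\cal H}$ on $n$ vertices. Then for every $c>0$, and in particular for $c_k = 1/k$, there is a finite graph ${\cal G}_k$ on $n_k$ vertices containing neither a complete nor an empty induced subgraph of size $\ge n_k^{1/k}$, and into which ${\cal H}$ does not embed. By passing to a sufficiently fast-growing subsequence, I may assume $n_k$ grows as quickly as needed.

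Take $I = \omega$, put $A_i = V({\cal G}_i)$, and construct a nonprincipal ultrafilter ${\cal F}$ on $I$ satisfying conditions (i)--(ii) of Lemma \ref{transversallemma}. By classical model theory (cf.\ \cite{clasth}) one has $\lambda$-regular nonprincipal ultrafilters with prescribed $\lambda = lcf(\aleph_0,{\cal F})$, equipped with a regularizing family $E = \{e_\alpha : \alpha<\lambda\} \subseteq {\cal F}$ for which every $|\nu(i)|$ is finite. The inequality $2^{|\nu(i)|}\le|A_i|$ is then arranged by pre-thinning the sequence so that $|A_i|$ dominates $2^{|\nu(i)|}$. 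Meshing $\lambda$-regularity with this growth condition is the main technical obstacle of the proof.

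Now form ${\cal G} = \prod_i {\cal G}_i/{\cal F}$ and verify the standing hypothesis of Theorem \ref{maintechnicalthm}: if $X = \prod_i X_i/{\cal F}$ is decomposable and ${\cal G}|_X$ is complete or empty, then by \L o\'s's theorem ${\cal G}_i|_{X_i}$ is complete or empty for ${\cal F}$-almost every $i$, hence $|X_i| < |A_i|^{1/i}$; for any fixed $c\in\Re^+$ the set $\{i : |X_i| \ge |A_i|^c\}$ is contained in $\{i : i<1/c\}$, which is finite and so not in ${\cal F}$, showing that $X$ is small. Use Lemma \ref{edgelemma}(3) to pick a big $X'\subseteq V({\cal G})$ with the property stated there, and partition each fibre $X'_i$ into $n$ roughly equal pieces, yielding $n$ disjoint big subsets $Y_0,\dots,Y_{n-1}$ of $X'$ (each piece has size at least $|X'_i|/n \ge |A_i|^{c/2}$ for large $i$).

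Theorem \ref{maintechnicalthm} now supplies an embedding $\varrho : {\cal H} \hookrightarrow {\cal G}$ with $\varrho(j) \in Y_j$ for each $j<n$. Since ${\cal H}$ is finite, realizing it as an induced subgraph on the $n$-tuple $(\varrho(0),\dots,\varrho(n-1))$ is a first-order condition, so by \L o\'s's theorem the same condition holds in ${\cal G}_i$ for ${\cal F}$-almost every $i$, in particular for some $i$; this contradicts the choice of ${\cal G}_i$. The constraint $c({\cal H}) \le 1$ is automatic, since larger values of $c$ only weaken the hypothesis on ${\cal G}^*$.
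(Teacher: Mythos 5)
Your high-level strategy matches the paper: argue by contradiction, form an ultraproduct ${\cal G}=\prod_i {\cal G}_i/{\cal F}$, invoke Theorem \ref{maintechnicalthm} to embed ${\cal H}$ in ${\cal G}$, and transfer back to a finite factor via {\L}o\'s. But there is a genuine gap precisely where you flag ``the main technical obstacle'': with $I=\omega$, the ultrafilter ${\cal F}$ you need cannot exist. Condition (i) of Lemma \ref{transversallemma} demands that ${\cal F}$ be $\lambda$-regular for $\lambda = lcf(\aleph_0,{\cal F})$, and $lcf(\aleph_0,{\cal F})$ is always $\geq \aleph_1$ for a nonprincipal ${\cal F}$. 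However, an uncountably-regular ultrafilter on $\omega$ is impossible: a point-finite family $E\subseteq {\cal F}$ contributes at most $\sum_{n\in\omega}|\nu(n)| = \aleph_0$ incidences $(n,e)$ with $n\in e$, while each $e\in E$ is infinite, so $|E|=\aleph_1$ would force $\aleph_1$ incidences --- a contradiction. So ``construct a nonprincipal ultrafilter ${\cal F}$ on $\omega$ satisfying (i)--(ii)'' is asking for an object that does not exist.

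The ``pre-thinning'' you propose for condition (ii) compounds the problem, because it is circular: $\nu(i)$ is determined by the regularizing family $E\subseteq{\cal F}$, so you cannot thin the sequence of graphs to dominate $2^{|\nu(i)|}$ before ${\cal F}$ (hence $E$, hence $\nu$) exists, yet ${\cal F}$ is an ultrafilter over the very index set on which the graphs are placed. The paper resolves both issues simultaneously by reversing the order of choices and enlarging the index set: it takes $I=\aleph_1$ and invokes Shelah's Theorem VI.3.12 to obtain a \emph{regular} ultrafilter ${\cal F}$ on $\aleph_1$ with $lcf(\aleph_0,{\cal F})=\aleph_1$, fixes the regularizing family $E$ (hence $\nu$) first, and only then assigns to each index $i$ a graph ${\cal G}_{c(i)}$ with $c(i)<1/\nu(i)$. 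The elementary estimate $|V({\cal G}_c)|\geq 2^{1/c}$ (which you omit, and which holds because ${\cal G}_c$ must contain both an edge and a non-edge, so $|V({\cal G}_c)|^c\geq 2$) then yields $|V({\cal G}_{c(i)})|\geq 2^{1/c(i)} > 2^{\nu(i)}$, verifying condition (ii). The rest of your outline --- smallness of complete/empty decomposable sets via {\L}o\'s, application of Lemma \ref{edgelemma}(3) and Theorem \ref{maintechnicalthm}, and transfer via the diagram formula --- is sound once the ultrafilter is set up this way.
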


\begin{proof}
Let ${\cal H}$ be a finite graph and assume,
seeking a contradiction, that for any $c \in \Re^{+}, \ c \leq 1$
there exists a finite graph ${\cal G}_{c}$ such that ${\cal
G}_{c}$ does not contain complete and empty induced subgraphs of
size $|V({\cal G}_{c})|^{c}$, but ${\cal H}$ cannot be
isomorphically embedded into ${\cal G}_{c}$. \\
\indent By Lemma \ref{ultrafilterlemma} There exist a set $I$ and
an ultrafilter ${\cal F} \subseteq {\cal P}(I)$ which is $lcf(
\aleph_{0},{\cal F})$-regular, $\aleph_{1}$-incomplete and
$(2^{\aleph_{0}})^{+}$-good. Let $\lambda :=lcf(\aleph_{0},{\cal
F})$. Particularly, there exists $E \in [{\cal F}]^{\lambda}$
such that for any $i \in I$ we have $\nu(i):=\{e \in E: i \in e
\}$ is finite (we will assume, that $I \in E$, thus $\nu(i) \geq
1$, for all $i \in I$). For each $i \in I$ let $c(i) \in \Re^{+}$
be such that $c(i) < 1/\nu(i)
$. Since $1 \leq \nu(i) $ for all $i \in I$, it follows, that
${\cal G}_{c(i)}$ is defined for all $i \in I$. Finally, let
$${\cal G} = \prod_{i \in I} {\cal G}_{c(i)}/{\cal F}.$$
%
%
We claim, that ${\cal G}$ does not contain empty or complete big
induced subgraphs. To show this, let $X = \langle X_{i}: i \in I
\rangle /{\cal F} \subseteq V({\cal G})$ be any decomposable, big
set. Then there exists $c \in \Re^{+}$ such that $I':=\{i \in I:
|X_{i}| \geq |V({\cal G}_{i})|^{c} \} \in {\cal F}$. Let $k \in
\omega$ be such that $1/k < c$, and let $J$ be the intersection
of any $k$ distinct elements of $E$. Then, for any $i \in I' \cap
J$ we have $\nu(i) \geq k$, hence
$$c(i) \ \leq \ \frac{1}{\nu(i)} \  \leq \ \frac{1}{k} \ \leq \
c,$$ so $X_{i}$ does not induce a complete or empty subgraph of
${\cal G}_{c(i)}$. It follows, from the {\L}o\'s Lemma, that $X$
does
not induce a complete or empty subgraph of ${\cal G}$. \\
\indent  Suppose $\langle f_{\alpha} \in {}^{I}\omega: \alpha <
\lambda \rangle$ is a sequence of unbounded functions satisfying
the consequences of Lemma \ref{ladderlemma}. According to the
previous paragraph, the conditions of Lemma \ref{edgelemma} are
satisfied. So, there exists $\alpha < \lambda$ which satisfies the
conclusion of \ref{edgelemma}, in addition, applying the last
five lines of Lemma \ref{edgelemma} to $X=V({\cal G})$, we obtain
$X' \subseteq V({\cal G})$ such
that \\[1mm]
\indent (i) $X'$ is big (particularly, it is decomposable: $X'= \langle X'_{i}: i \in I \rangle/{\cal F}$);
\\[1mm]
\indent (ii) if $V,W \subseteq X'$ are disjoint and
$\alpha$-close to $X'$ then there are $a,a' \in V$ and $b,b' \in
W$ such that $\langle a,b \rangle \in E({\cal G})$ and $\langle
a',b' \rangle
\not\in E({\cal G})$. \\[1mm]
\indent It is easy to see, that $X'$ can be partitioned into
$n:=|V({\cal H})|$-many disjoint decomposable sets
$Y_{0},...,Y_{n-1}$ such that for all $j < n$ we have $$\{i \in
I: |(Y_{j})_{i}| \geq \frac{1}{2n} \cdot |X'_{i}| \} \in {\cal
F}.$$ Then Theorem \ref{maintechnicalthm} can be applied: there
exist $a_{0} \in Y_{0},...,a_{n-1} \in Y_{n-1}$ such that the
subgraph of ${\cal G}$ induced by $\{a_{0},...,a_{n-1} \}$ is
isomorphic to ${\cal H}$. It is well known, that there exists a
first order formula $\delta_{\cal H}$ (called the diagram of
${\cal H}$) such that for any graph ${\cal M}$ we have ${\cal M}
\models \delta_{\cal H}$ iff ${\cal H}$ can be isomorphically
embedded into ${\cal M}$; for more details we refer to \cite{chk}.
Hence - again by the {\L}o\'s Lemma \\
\\
\centerline{ $\{i \in I: {\cal H}$ can be isomorphically
embedded into ${\cal G}_{c(i)} \} \in {\cal F}$,} \\
\\
contradicting to the first sentence of the present proof.
\end{proof}

\section{Concluding Remarks}
\label{conc}

In this section we describe some problems which remained open.

\begin{problem}
In Theorem \ref{maintechnicalthm}, can the conditions on the
ultrafilter ${\cal F}$ be replaced by weaker ones such that
Theorem \ref{maintechnicalthm} remains true ?
\end{problem}

The proof of Theorem \ref{mainthm} above establishes the
existence of $c({\cal H})$, but does not provide methods to
compute, or estimate it from (the structure of) ${\cal H}$.
Hence, the next problem is quite interesting, and remained
completely open.

\begin{problem}
Develop methods estimating $c({\cal H})$ from ${\cal H}$. In
particular, is it true, that $c({\cal H})$ may be chosen to be
$2^{-|V({\cal H})|}$ ?
\end{problem}

In the proof of Theorem \ref{mainthm} we found a single copy of
${\cal H}$ in the ultraproduct graph ${\cal G}$, because for the
present purposes this was enough. However, it is easy to see,
that ${\cal G}$ contains ``many'' isomorphic copies of ${\cal H}$.
In that direction the following problem remained open.

\begin{problem}
Suppose ${\cal H}$ is a finite graph and ${\cal G} = \prod_{i \in
I}{\cal G}_{i} /{\cal F} $ is an ultraproduct of finite graphs
such that if $X \subseteq V({\cal G})$ is decomposable and
induces a complete or empty subgraph of ${\cal G}$ then $X$ is
small. Is it true, that the set of isomorphic copies of ${\cal
H}$ in ${\cal G}$ is big in the following sense: there exists $c
\in \Re^{+}$ such that if $Y = \langle Y_{i}: i\in I \rangle
/{\cal F} \subseteq {}^{|V({\cal H})|}{\cal G}$ is a decomposable
subset containing all the isomorphic copies of ${\cal H}$, then
$$\{ i \in I: |Y_{i}| \geq |V({\cal G}_{i})|^{|V({\cal H})|\cdot c}
\} \in {\cal F} \ ?$$ Is this true, if we assume further
properties of the ultrafilter ${\cal F}$ ?
\end{problem}

In \cite{CFS} it was shown, that the analogue of Theorem
\ref{mainthm} for $k$-uniform hypergraphs is not true, if $k \geq
4$. This motivates our last problem.

\begin{problem}
Can Theorem \ref{mainthm} be generalized to $3$-uniform
hypergraphs ?
\end{problem}

\noindent {\bf Acknowledgements.} I am very grateful to J\'anos
Pach for his valuable comments on an earlier version of this
paper. I am also very grateful to Andr\'as Hajnal and Istv\'an
Juh\'asz for their constant encouragement.

\leftline{Alfr\'ed R\'enyi Institute of Mathematics \indent
\indent \ \indent \indent \indent BUTE } \leftline{Hungarian
Academy of Sciences \indent \indent \indent \indent \indent
\indent \indent Department of Algebra} \leftline{Budapest Pf. 127
\indent \indent \indent \indent \indent \indent \indent and
\indent \indent \indent Budapest, Egry J. u. 1} \leftline{H-1364
Hungary \indent \indent \indent \indent \indent \indent \indent
\indent \indent \indent \indent \ \ \  H-1111 Hungary}
\leftline{e-mail: sagi@renyi.hu \indent \indent \indent \indent
\indent \indent \indent \indent \indent \indent \ e-mail:
sagi@math.bme.hu}


\begin{thebibliography}{99}

\bibitem {chk} {\sc C. C. Chang, H. J. Keisler,}
{\it Model Theory, \/} North--Holland, Amsterdam (1973).
\bibitem {Chud} {\sc M. Chudnovsky,}
{\it The Erd\H{o}s-Hajnal conjecture -- a survey, \/} J. Graph
Theory Vol. 75, No. 2, pp. 178-190 (2014).
\bibitem {CFS} {\sc D. Conlon, J. Fox, B. Sudakov,}
{\it Erd\H{o}s-Hajnal-type theorems in hypergraphs, \/} J. Combin
Theory Ser. B, 102, No. 5, pp. 1142-1154 (2012).
\bibitem {ElekSzegedy} {\sc G. Elek, B. Szegedy,}
{\it A Measure Theoretic Approach to the Theory of Dense
Hypergraphs, \/} Adv. Math. 231 No. 3-4 pp. 1731-1772 (2012).
\bibitem {ErdosHajnal} {\sc P. Erd\H{o}s, A. Hajnal,}
{\it Ramsey-Type Theorems, \/} Discrete Appl. Math. 25 pp. 37-52
(1989).
\bibitem{ErdosHajnalPach} {\sc P. Erd\H{o}s, A. Hajnal, and J. Pach,}
{\it A Ramsey-type theorem for bipartite graphs, \/}
Geombinatorics 10 No. 2, pp. 64-68 (2000).
\bibitem {FoxPach} {\sc J. Fox, J. Pach,}
{\it Erd\H{o}s-Hajnal-type results on intersection patterns of
geometric objects, \/} Horizons of Combinatorics, 79--103, Bolyai
Soc. Math. Stud. 17, Springer, Berlin (2008).
\bibitem{FoxSudakov} {\sc J. Fox and B. Sudakov,}
{\it Induced Ramsey-type theorems, \/} Advances in Mathematics 219
 pp. 1771-1800 (2008).
\bibitem {ut} {\sc J. Gerlits, G. S\'agi,}
Ultratopologies, Math. Logic Quarterly,  Vol. 50 No. 6, pp.
603-612 (2004).
\bibitem {GRS} {\sc R. L. Graham, B. L. Rothschild, J. H. Spencer,}
{\it Ramsey Theory, \/} Wiley--Interscience publication, John
Wiley \& Sons, New York (1990).
\bibitem {Hodges} {\sc W. Hodges,}
{\it Model theory, \/} Cambridge University Press (1997).
\bibitem {limitultrapow} {\sc H. J. Keisler,}
Limit Ultrapowers, {\it  Trans. Am. Math. Soc. \/} 107, pp.
383-408 (1963).
\bibitem {limitultraprod} {\sc H. J. Keisler,}
Limit Ultraproducts, {\it J. Symb. Logic \/} 30, pp. 212-234
(1965).
\bibitem{Malliaris3}, {\sc M. Malliaris,} Hypergraph sequences as a tool for
saturation of ultrapowers, {\it J. Symbolic Logic \/} Vol. 77, No.
1, pp. 195-223 (2012).
\bibitem {Malliaris} {\sc M. Malliaris, S. Shelah,}
Regularity lemmas for stable graphs, {\it Trans. Amer. Math. Soc.
\/} Vol. 366, No. 3, pp. 1551-1585 (2014).
\bibitem{Malliaris2} {\sc
M. Malliaris, A. Pillay,}  The stable regularity lemma revisited,
{\it Proc. Amer. Math. Soc. \/} Vol. 144, No. 4, pp. 1761-1765
(2016).
\bibitem{Malliaris4} {\sc
M. Malliaris, S. Shelah,} Cofinality spectrum problems: the
axiomatic approach, {\it Topology Appl. \/} Vol. 213, pp. 50-79
(2016).
\bibitem {cl1a} {\sc G. S\'agi,}
Ultraproducts and higher order formulas, {\it Math. Logic
Quarterly, \/} Vol. 48, No. 2, pp. 261-275 (2002).
\bibitem {SagiGyenis} {\sc G. S\'agi, Z. Gyenis,}
Upward Morley's Theorem Downward, {\it Math. Logic Quarterly, \/}
Vol. 59, No. 4-5, pp. 303-331 (2013).
\bibitem {sash} {\sc G. S\'agi, S. Shelah,} On Topological Properties of
Ultraproducts of Finite Sets, {\it Math. Logic. Quarterly, \/}
Vol. 51 No. 3, pp. 254-257 (2005).
\bibitem {clasth} {\sc S. Shelah,}
{\it Classification Theory, \/} North--Holland, Amsterdam (1990).
\bibitem {Towsner} {\sc H. Towsner,}
A Model Theoretic Proof of Szemerédi's Theorem, Manuscript
(2010)  http://arxiv.org/abs/1002.4456.

\end{thebibliography}
\end{document}